\numberwithin{equation}{section}
\theoremstyle{definition}
\theoremstyle{plain}
\newtheorem{theorem}{Theorem}
\newtheorem{lem}{Lemma}
\newtheorem{prop}{Proposition}
\numberwithin{equation}{section}
\renewcommand{\r}{\mathbb{R}}
\renewcommand{\and}{\quad\textrm{ and }\quad}
\let \epsilon\varepsilon
\let \phi\varphi
\newcommand{\dVol}{\mathrm{dVol}}
\newcommand{\tr}{\operatorname{tr}}
\author{Tomasz Cie\'{s}lak}
\address{Institute of Mathematics \newline Polish Academy of Sciences \newline \'Sniadeckich 8, 00-656 Warszawa, Poland}
\email{cieslak@impan.pl}
\author{Micha\l{} Gaczkowski}
\address{Faculty of Mathematics and Information Sciences, Warsaw University of Technology,
Ul. Koszykowa 75, 00-662 Warsaw, Poland}
\email{Michal.Gaczkowski@pw.edu.pl}
\author{Wojciech Kry\'nski}
\address{Institute of Mathematics \newline Polish Academy of Sciences \newline \'Sniadeckich 8, 00-656 Warszawa, Poland}
\email{krynski@impan.pl}
\title[A functional inequality between Hessians]{A functional inequality between Hessians in spaces with non-zero curvature}
\begin{document}
\begin{abstract}
A version of the recent functional inequality (see \cite{CHFS})  between the Hessians of the square root and the logarithm of positive functions  is proven in spaces with non-zero curvature.
\end{abstract}

\maketitle

\section{Introduction.}
The main result of this note is an extension of the inequality stated in the flat case in \cite{CHFS} to the setting of Riemannian manifolds.
\begin{theorem}\label{glowne}
Let $(M,g)$ be a compact Riemannian manifold with no boundary. There exists a positive constant $C$ such that for any positive smooth function $u\colon M\to \r$ the following inequality holds
\begin{equation}\label{main}
\int_M |\nabla^2 \sqrt{u}|^2\;\dVol_g\leq C \int_M u|\nabla^2 \log u|^2\;\dVol_g,
\end{equation}
where $\nabla$ is the covariant derivative and $\dVol_g$ is the volume form on $M$.
\end{theorem}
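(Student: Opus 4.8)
The plan is to pass to $v=\sqrt u$ and reduce \eqref{main} to an elementary weighted inequality that closes after a single integration by parts, thereby avoiding any Bochner-type manipulation where curvature would enter.

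First I would record two pointwise identities. Since $v=\sqrt u$ is smooth and positive, the product rule for covariant derivatives gives $\nabla^2\log v=\frac1v\nabla^2 v-\frac1{v^2}\,\nabla v\otimes\nabla v$, and $\log u=2\log v$, so that
\[
|\nabla^2\sqrt u|^2=|\nabla^2 v|^2,\qquad u\,|\nabla^2\log u|^2=4\,\bigl|\nabla^2 v-\tfrac1v\,\nabla v\otimes\nabla v\bigr|^2 .
\]
Abbreviating $E:=\nabla^2 v-\frac1v\,\nabla v\otimes\nabla v=v\,\nabla^2\log v$ and $B:=\frac1v\,\nabla v\otimes\nabla v$, so that $\nabla^2 v=E+B$ and $|B|^2=\frac{|\nabla v|^4}{v^2}$, inequality \eqref{main} is equivalent to $\int_M|\nabla^2 v|^2\,\dVol_g\le 4C\int_M|E|^2\,\dVol_g$. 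From the pointwise bound $|\nabla^2 v|^2\le 2|E|^2+2|B|^2$ we see it suffices to control the weight $|B|^2$; that is, the theorem follows once we prove the weighted inequality
\[
\int_M\frac{|\nabla v|^4}{v^2}\,\dVol_g\ \le\ C'\int_M|E|^2\,\dVol_g
\]
for a constant $C'$ independent of $u$, and then $C=\tfrac12(1+C')$.

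To establish this weighted inequality I would substitute $w=\log v$. Then $\frac{|\nabla v|^4}{v^2}=v^2|\nabla w|^4$, $|E|^2=v^2|\nabla^2 w|^2$, and $v^2\nabla w=\tfrac12\nabla(v^2)$; moreover $v^2|\nabla w|^4=\tfrac12\langle\nabla(v^2),\,|\nabla w|^2\,\nabla w\rangle$ pointwise. Since $M$ is compact without boundary and $v^2|\nabla w|^2\,\nabla w$ is a smooth vector field, the divergence theorem gives
\[
\int_M v^2|\nabla w|^4\,\dVol_g=-\tfrac12\int_M v^2\,\div\bigl(|\nabla w|^2\,\nabla w\bigr)\,\dVol_g .
\]
Expanding $\div\bigl(|\nabla w|^2\,\nabla w\bigr)=\langle\nabla|\nabla w|^2,\nabla w\rangle+|\nabla w|^2\,\Delta w$ and using $\langle\nabla|\nabla w|^2,\nabla w\rangle=2\,\nabla^2 w(\nabla w,\nabla w)$ yields
\[
\int_M v^2|\nabla w|^4\,\dVol_g=-\int_M v^2\,\nabla^2 w(\nabla w,\nabla w)\,\dVol_g-\tfrac12\int_M v^2|\nabla w|^2\,\Delta w\,\dVol_g .
\]
Now I would estimate pointwise $|\nabla^2 w(\nabla w,\nabla w)|\le|\nabla^2 w|\,|\nabla w|^2$ and $|\Delta w|\le\sqrt d\,|\nabla^2 w|$ (with $d=\dim M$), and then apply the Cauchy--Schwarz inequality, to obtain
\[
\int_M v^2|\nabla w|^4\,\dVol_g\le\Bigl(1+\tfrac{\sqrt d}{2}\Bigr)\Bigl(\int_M v^2|\nabla^2 w|^2\,\dVol_g\Bigr)^{1/2}\Bigl(\int_M v^2|\nabla w|^4\,\dVol_g\Bigr)^{1/2},
\]
hence $\int_M v^2|\nabla w|^4\,\dVol_g\le(1+\tfrac{\sqrt d}{2})^2\int_M v^2|\nabla^2 w|^2\,\dVol_g$, which is the weighted inequality with $C'=(1+\sqrt d/2)^2$. (If $\nabla w\equiv 0$ then $u$ is constant and \eqref{main} is trivial; otherwise one divides by the positive finite quantity $\int_M v^2|\nabla w|^4\,\dVol_g$.) Tracing the constants back, \eqref{main} holds with $C=\tfrac12\bigl(1+(1+\sqrt d/2)^2\bigr)$.

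I do not expect a serious obstacle here: the only point requiring care is the legitimacy of the integration by parts, which is immediate from smoothness of $u$ and compactness of $M$. What is perhaps worth stressing is that every ingredient used — symmetry of the Hessian of a function, metric compatibility of $\nabla$, and the divergence theorem — is insensitive to curvature, so the constant $C$ produced this way depends only on $\dim M$; the genuinely Riemannian content of the statement is merely that such an inequality persists at all, with no curvature correction terms appearing.
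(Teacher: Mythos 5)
Your argument is correct, but it takes a genuinely different route from the paper's. You bypass the Bochner formula entirely: the pointwise chain rule for the covariant Hessian, $v\,\nabla^2\log v=\nabla^2 v-\tfrac1v\,dv\otimes dv$ with $v=\sqrt u$, gives $u|\nabla^2\log u|^2=4|E|^2$ and $|\nabla^2\sqrt u|^2=|E+B|^2\le 2|E|^2+2|B|^2$ with $|B|^2=|\nabla v|^4/v^2=\tfrac1{16}|\nabla u|^4/u^3$, so the theorem reduces in one line to the Bernis-type inequality (Proposition \ref{Bernis}), which you then prove by exactly the same integration by parts, Cauchy--Schwarz, and trace estimate \eqref{raz} that the paper uses. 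This is essentially the ``tensorial version of the pointwise identities'' that the paper's introduction sets aside as problematic; your decomposition shows it is in fact unproblematic, because the only tensorial input needed is the chain rule for the Hessian. The paper instead applies the Bochner formula to $\sqrt u$ and to $\log u$ separately and derives the exact integral identity \eqref{piate}, in which the two Ricci terms cancel; that buys an equality (useful for tracking constants and for the Fisher-information applications mentioned in the introduction), at the cost of a longer computation in which the disappearance of curvature looks like an accidental cancellation. Your version is shorter, never invokes curvature, makes that cancellation structural, and yields the explicit constant $C=\tfrac12\bigl(1+(1+\sqrt d/2)^2\bigr)$ depending only on $\dim M$. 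The only details worth a sentence each in a write-up are that the Frobenius norms of the endomorphism and bilinear-form incarnations of these tensors agree in an orthonormal frame (as the paper's preliminaries note), and the degenerate case $\nabla w\equiv 0$ before dividing by $\bigl(\int_M v^2|\nabla w|^4\,\dVol_g\bigr)^{1/2}$, which you already address.
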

Before emphasizing the potential meaning of the theorem, let us briefly comment on the proof. The proof in the flat case \cite{CHFS} is based on pointwise identities (see \cite[page 62]{CHFS}), which necessarily contain additional metric related components in the Riemannian case.
One could try to replace the above identities by its tensorial version, but this would require giving the tensorial meaning to some non-trivial terms.
Instead, we use the Bochner formula, which is recalled in the next section and leads to terms that can be integrated by parts in a clear way.

As a by-product of our reasoning, we also obtain the following Riemannian version of the Bernis-type inequality.
The proof of this inequality is a straightforward adaptation of the flat case argument from \cite{Winkler} to the setting of Riemannian geometry.
\begin{prop}\label{Bernis}
Let $(M,g)$ be a compact Riemannian manifold with no boundary. There exists a positive constant $C$ such that for any positive smooth function $u\colon M\to \r$ the following inequality holds
\begin{equation}
\int_M \frac{|\nabla u|^4}{u^3}\;\dVol_g\leq C \int_M u|\nabla^2 \log u|^2\;\dVol_g.
\end{equation}
\end{prop}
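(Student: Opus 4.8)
The plan is to reduce everything to the single function $\phi:=\log u$ and then to conclude by one integration by parts followed by Cauchy--Schwarz, exactly as in the flat argument of \cite{Winkler}; the whole content of the Riemannian statement is the observation that this particular chain of manipulations never requires commuting covariant derivatives, so no curvature terms are produced. Writing $u=e^{\phi}$ one has $\nabla u=e^{\phi}\nabla\phi$, hence $|\nabla u|^4/u^3=e^{\phi}|\nabla\phi|^4$ and $u|\nabla^2\log u|^2=e^{\phi}|\nabla^2\phi|^2$, so the asserted inequality is equivalent to
\begin{equation*}
\int_M e^{\phi}\,|\nabla\phi|^4\;\dVol_g\;\le\;C\int_M e^{\phi}\,|\nabla^2\phi|^2\;\dVol_g .
\end{equation*}

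First I would integrate by parts. Since $e^{\phi}\nabla\phi=\nabla e^{\phi}$ and $M$ is compact without boundary, the divergence theorem gives
\begin{equation*}
\int_M e^{\phi}|\nabla\phi|^4\;\dVol_g=\int_M |\nabla\phi|^2\,\langle\nabla e^{\phi},\nabla\phi\rangle\;\dVol_g=-\int_M e^{\phi}\,\div\!\bigl(|\nabla\phi|^2\nabla\phi\bigr)\;\dVol_g .
\end{equation*}
Next I would expand the divergence by the product rule $\div(fX)=\langle\nabla f,X\rangle+f\,\div X$ together with the elementary identity $\nabla|\nabla\phi|^2=2\,\nabla^2\phi(\nabla\phi,\cdot)$, which follows directly from metric compatibility and symmetry of the Levi--Civita connection and involves no commutator, hence no curvature; this yields
\begin{equation*}
\div\!\bigl(|\nabla\phi|^2\nabla\phi\bigr)=2\,\nabla^2\phi(\nabla\phi,\nabla\phi)+|\nabla\phi|^2\,\Delta\phi .
\end{equation*}

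From here the estimate is purely pointwise and algebraic. Using $|\nabla^2\phi(\nabla\phi,\nabla\phi)|\le|\nabla^2\phi|\,|\nabla\phi|^2$ and $|\Delta\phi|=|\tr\nabla^2\phi|\le\sqrt{\dim M}\,|\nabla^2\phi|$ one gets
\begin{equation*}
\int_M e^{\phi}|\nabla\phi|^4\;\dVol_g\le\bigl(2+\sqrt{\dim M}\,\bigr)\int_M e^{\phi}\,|\nabla^2\phi|\,|\nabla\phi|^2\;\dVol_g,
\end{equation*}
and then Cauchy--Schwarz, $\int_M e^{\phi}|\nabla^2\phi|\,|\nabla\phi|^2\le\bigl(\int_M e^{\phi}|\nabla^2\phi|^2\bigr)^{1/2}\bigl(\int_M e^{\phi}|\nabla\phi|^4\bigr)^{1/2}$, lets one absorb half a power of the left-hand side and conclude with $C=(2+\sqrt{\dim M})^2$. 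I do not expect a genuine obstacle here: the only points needing a word of care are the vanishing of boundary terms (guaranteed by compactness and $\partial M=\emptyset$) and the a priori finiteness of $\int_M e^{\phi}|\nabla\phi|^4$, which is immediate since $u$ is smooth and $M$ is compact, so that the absorption step is legitimate.
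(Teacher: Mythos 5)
Your proposal is correct and is essentially the paper's own argument: the paper also integrates $\int g(\nabla u,\nabla\log u)|\nabla\log u|^2$ by parts to produce the terms $u\,\Delta_g\log u\,|\nabla\log u|^2$ and $u\,\nabla^2\log u(\nabla\log u,\nabla\log u)$, bounds the trace via $|\Delta_g f|^2\le n|\nabla^2 f|^2$, and concludes by Cauchy--Schwarz and absorption, yielding the same constant $(2+\sqrt{n})^2$. Rewriting everything in terms of $\phi=\log u$ is only a notational repackaging, and your remarks on the vanishing of boundary terms and the legitimacy of the absorption step match the paper's (implicit) justifications.
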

The version of inequality \eqref{main} in the flat case has proven to be a fruitful tool in the investigation of the system of PDEs arising in 1D thermoelasticity problems, see \cite{BC1,BC2}. The global existence of unique solutions was established in \cite{BC1}, and a full characterization of the asymptotic states of a heated string was given in \cite{BC2}. Furthermore, an application of the inequality to the combustion problems was given in \cite{S.Li}. The applicability of \eqref{main} to systems of PDEs comes from the fact that the right-hand side of \eqref{main} coincides exactly with the negative of the dissipation term of the evolution of the Fisher information along the heat flow. One can thus construct functionals based on Fisher information that can be bounded using \eqref{main} and the dissipation term.

In the context of the Ricci flow, the Fisher information was used in a pointwise sense by Hamilton in \cite{Hamilton}, and later, Perelman introduced his functional \emph{W} in \cite{Perelman}
\[
\emph{W}(g_{ij}, f, \tau)=\int_M \left[\tau (|\nabla f|^2+R)+f-n\right](4\pi \tau)^{-\frac{n}{2}}e^{-f}\;\dVol_g,
\]
where, $n=\dim M$, $g$ is a Riemannian metric tensor on $M$, $R$ is the scalar curvature of $g$,  $f$ is a function on $M$ satisfying
\[
\int_M (4\pi \tau)^{-\frac{n}{2}}e^{-f}\;\dVol_g=1,
\]
and $\tau$ is a rescaled time. Let us take a closer look at the functional in order to see its relation to the Fisher information.
Along the Ricci flow the dissipation of $\emph{W}$ is given by
\[
\frac{d{\emph W}}{dt}=2\tau \left|Ric_{ij}+\nabla_i\nabla_j f-\frac{1}{2\tau}g_{ij}\right|^2(4\pi \tau)^{-\frac{n}{2}}e^{-f}\;\dVol_g.
\]
Notice that for  $u:=e^{-f}$, the functional ${\emph W}$ resembles the Fisher information of $u$ plus some geometric contribution terms, while
the dissipation term plays the role of $u|\nabla^2 \log u|^2$ plus some geometric contribution terms. Hence, one may expect that our inequality \eqref{main} can be used in the analysis of the Ricci flow.

\section{Preliminaries.}
As stated in Theorem \ref{glowne}, we shall find a relation between $\int_{M}|\nabla ^2 \sqrt{u}|^2 \dVol_g$ and $\int_{M}u|\nabla ^2 \log{u}|^2\dVol_g$, where $M$ is a compact manifold with no boundary, $\log$ is the natural logarithm and $|\cdot|^2$ is understood as the norm of an endomorphism. For a given $A\colon TM\to TM$, it is defined as the trace $\tr\left(g^{-1}\circ A^* \circ g \circ A\right)$, where the metric tensor $g$ is regarded as a map $g\colon TM\to T^*M$ that raises indices, and $A^*\colon T^*M\to T^*M$ is the dual map. Note that the matrix of $A^*$ is given as $A^T$ in any frame. Consequently, in an orthonormal frame,  $|A|^2$ becomes the Frobenius norm of the matrix of the endomorphism $A$ and is computed as $\tr A^TA$.

Recall that the gradient $\nabla f$ of a function $f\colon M\to \r$ is a vector field on $M$ defined via the identity $g(\nabla f,\cdot)=df$. Then, the Hessian can be understood as an endomorphism $\nabla^2f\colon TM\to TM$ of the tangent bundle defined as $X\mapsto \nabla^{LC}_X(\nabla f)$, where $\nabla^{LC}$ stands for the Levi-Civita connection. Note that the composition $g\circ\nabla^2 f$ is the bi-linear form on $M$ defined as $\nabla^{LC}(df)$, where now $\nabla^{LC}$ is the Levi-Civita connection acting on 1-forms. Indeed, this is a consequence of the fact that the Levi-Civita connection is compatible with the metric. Later on, we shall abuse the notation and treat $\nabla^2 f$ as a bilinear form, writing $\nabla^2 f(X, Y):=\nabla^{LC}_X(df)(Y)=g(X,(\nabla^2 f)(Y))$, where $X$ and $Y$ are vector fields on $M$.

\begin{lem}\label{auxi}
Let $u\colon M\to \r$. Then the following identity holds
\[
g(\nabla u,\nabla\left|\nabla u\right|^2)=2\nabla^2u(\nabla u,\nabla u).
\]
\end{lem}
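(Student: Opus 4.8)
The plan is to reduce the left-hand side to a directional derivative and then apply the metric compatibility of the Levi-Civita connection. First I would recall that for any smooth function $f$ on $M$ one has $g(\nabla u,\nabla f)=df(\nabla u)=\nabla u(f)$, i.e. the derivative of $f$ along the vector field $\nabla u$. Applying this with $f=\left|\nabla u\right|^2=g(\nabla u,\nabla u)$ rewrites the left-hand side as $\nabla u\bigl(g(\nabla u,\nabla u)\bigr)$.

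Next I would invoke that the Levi-Civita connection $\nabla^{LC}$ is compatible with $g$, namely $X\bigl(g(Y,Z)\bigr)=g(\nabla^{LC}_XY,Z)+g(Y,\nabla^{LC}_XZ)$ for vector fields $X,Y,Z$. Taking $X=Y=Z=\nabla u$ gives $\nabla u\bigl(g(\nabla u,\nabla u)\bigr)=2\,g(\nabla^{LC}_{\nabla u}\nabla u,\nabla u)$. By the very definition of the Hessian as the endomorphism $X\mapsto\nabla^{LC}_X(\nabla u)$, the term $\nabla^{LC}_{\nabla u}\nabla u$ equals $(\nabla^2 u)(\nabla u)$, so this becomes $2\,g\bigl((\nabla^2 u)(\nabla u),\nabla u\bigr)$, which, by the convention $\nabla^2 u(X,Y)=g(X,(\nabla^2 u)(Y))$ together with the symmetry of $g$, is precisely $2\,\nabla^2 u(\nabla u,\nabla u)$, as claimed.

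I do not anticipate a genuine obstacle: the identity is purely local and pointwise, so neither compactness nor positivity of $u$ plays any role, and the single substantive ingredient is metric compatibility of $\nabla^{LC}$. The only thing requiring a little care is the bookkeeping of the index/placement conventions for the Hessian bilinear form set up in the Preliminaries. If one preferred a coordinate computation, the same statement follows by differentiating $g^{ij}\partial_i u\,\partial_j u$ and using the formula for the Christoffel symbols, but the coordinate-free argument above is shorter and makes the role of metric compatibility transparent.
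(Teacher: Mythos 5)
Your proposal is correct and follows essentially the same route as the paper: rewrite the left-hand side as the directional derivative $(\nabla u)\bigl(g(\nabla u,\nabla u)\bigr)$, apply metric compatibility of the Levi-Civita connection, and identify $\nabla^{LC}_{\nabla u}\nabla u$ with $(\nabla^2 u)(\nabla u)$. No gaps.
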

\begin{proof}
First, by definition, $\nabla\left|\nabla u\right|^2$ is the vector field that is $g$-dual to the differential 1-form $d(g(\nabla u,\nabla u))$. Hence, $g(\nabla u,\nabla\left|\nabla u\right|^2)=d(g(\nabla u,\nabla u))(\nabla u)=(\nabla u)(g(\nabla u,\nabla u))$. Since the Levi-Civita connection is compatible with the metric $g$, we obtain $(\nabla u)(g(\nabla u,\nabla u))=2g(\nabla^{LC}_{\nabla u}\nabla u, \nabla u)=2g(\nabla^2u(\nabla u),\nabla u)$, which is nothing but $2\nabla^2u(\nabla u,\nabla u)$.
\end{proof}

The pointwise Bochner formula, which holds for every function $u\colon M\to \r$, takes the form
\begin{equation}\label{Bochner}
\frac{1}{2} \Delta_g |\nabla u|^2=g\left(\nabla u, \nabla \Delta_g u\right)+|\nabla ^2 u|^2+Ric(\nabla u, \nabla u),
\end{equation}
where the Laplacian $\Delta_g(f)$ is defined as $\tr\nabla^2f$. We shall also use the following formulas, which can be verified by a direct calculation using covariant derivatives
\begin{equation}\label{log}
\Delta_g \log{u}=\frac{\Delta_gu}{u}-\frac{|\nabla u|^2}{u^2},
\end{equation}
and
\begin{equation}\label{sqrt}
\Delta_g \sqrt{u}=\frac{1}{2}\frac{\Delta_gu}{\sqrt{u}}-\frac{|\nabla u|^2}{4u^{3/2}}.
\end{equation}
Moreover, for a compact Riemannian manifold $(M,g)$ without boundaries, one has the expected integration by parts formula, see \cite[Theorem 2.2.7]{Stavrov}
\begin{equation}\label{byparts}
\int_M g(\nabla f,\nabla u)\;\dVol_g=\int_Mf\Delta_gu\;\dVol_g
\end{equation}
for any functions $u,f\colon M\to\r$.

Finally, let us recall a fact that is obvious in the flat case but requires a short proof in the general Riemannian setting.
\begin{lem}
Let $f\colon M\to \r$, $\dim M=n$. Then, the following pointwise estimate
\begin{equation}\label{raz}
|\Delta_g f|^2\leq n|\nabla^2 f|^2
\end{equation}
holds.
\end{lem}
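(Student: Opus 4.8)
The plan is to prove the estimate pointwise and to exploit the orthonormal-frame description of the two quantities set up in the Preliminaries. Fix a point $p\in M$ and choose an orthonormal basis $e_1,\dots,e_n$ of $T_pM$. In this basis, write the matrix of the endomorphism $\nabla^2 f\colon T_pM\to T_pM$ as $A=(a_{ij})$, where $a_{ij}=\nabla^2 f(e_i,e_j)$. Then, by definition, $\Delta_g f(p)=\tr\nabla^2 f=\sum_{i=1}^n a_{ii}$, while, as explained in the Preliminaries, in an orthonormal frame $|\nabla^2 f|^2$ is the Frobenius norm $\tr(A^TA)=\sum_{i,j=1}^n a_{ij}^2$.

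The inequality then reduces to an elementary linear-algebra fact. First, applying the Cauchy--Schwarz inequality to the vector $(a_{11},\dots,a_{nn})\in\r^n$ and the vector $(1,\dots,1)\in\r^n$ gives
\[
|\Delta_g f|^2=\Bigl(\sum_{i=1}^n a_{ii}\Bigr)^2\leq n\sum_{i=1}^n a_{ii}^2.
\]
Second, discarding the off-diagonal (non-negative) terms yields
\[
\sum_{i=1}^n a_{ii}^2\leq \sum_{i,j=1}^n a_{ij}^2=|\nabla^2 f|^2.
\]
Combining the two displays gives $|\Delta_g f|^2\leq n|\nabla^2 f|^2$ at $p$, and since $p$ was arbitrary the pointwise estimate \eqref{raz} follows.

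There is essentially no serious obstacle here: the only point that requires the small amount of care promised by the remark preceding the lemma is to pass to an orthonormal frame, so that the coordinate-free definition of $|\nabla^2 f|^2$ as $\tr(g^{-1}\circ (\nabla^2 f)^*\circ g\circ \nabla^2 f)$ genuinely coincides with the sum of squares of the matrix entries, and the Laplacian with the sum of the diagonal entries. Once this identification is made, the estimate is just Cauchy--Schwarz together with the non-negativity of the omitted terms, exactly as in the flat case. (One could equally note that $\nabla^2 f$ is $g$-self-adjoint and diagonalize it, so that $A$ has eigenvalues $\lambda_1,\dots,\lambda_n$ and the claim becomes $(\sum_i\lambda_i)^2\le n\sum_i\lambda_i^2$; the argument above avoids even invoking symmetry.)
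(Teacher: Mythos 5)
Your proof is correct, and it takes a mildly but genuinely different route from the paper's. The paper reduces the claim to the inequality $(\tr A)^2\leq n\tr(A^2)$, which it states for \emph{symmetric} matrices, and consequently spends essentially all of its effort verifying that the Hessian is symmetric in the Riemannian setting, i.e.\ that $(\nabla^{LC}_{X_i}df)(X_j)=(\nabla^{LC}_{X_j}df)(X_i)$ in an orthonormal frame, which it deduces from the Leibniz rule and the torsion-freeness of the Levi-Civita connection. You instead use the chain $\bigl(\sum_i a_{ii}\bigr)^2\leq n\sum_i a_{ii}^2\leq n\sum_{i,j}a_{ij}^2=n\tr(A^TA)$, which is valid for an \emph{arbitrary} matrix; since the paper defines $|\nabla^2 f|^2$ precisely as the Frobenius norm $\tr(A^TA)$ in an orthonormal frame, symmetry of the Hessian is never needed, as you note in your closing parenthetical. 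What your argument buys is economy: the only geometric input is the orthonormal-frame identification of $\Delta_g f$ with the diagonal sum and of $|\nabla^2 f|^2$ with the sum of squares of entries, both of which are already set up in the Preliminaries. What the paper's route records along the way is the symmetry of the Riemannian Hessian (equivalently, that $\tr(A^2)=\tr(A^TA)$ here), a standard fact that the authors evidently wished to make explicit; it is not, however, logically required for the estimate. Both arguments are pointwise and complete.
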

\begin{proof}
The formula is a consequence of the identity $(\tr A)^2\leq n\tr (A^2)$ which is true for any symmetric $n\times n$-matrix $A$. It is convenient to work in an orthonormal frame $(X^1,\ldots,X^n)$. Then the metric $g$ becomes the identity matrix, and the entries of the Hessian can be computed as $(\nabla^{LC}_{X_i}df)(X_j)$. Hence, it is sufficient to show that we have $(\nabla^{LC}_{X_i}df)(X_j)=(\nabla^{LC}_{X_j}df)(X_i)$.
In fact, for any vector fields $X$ and $Y$ and a 1-form $\alpha$ the following holds
\[
\begin{aligned}
(\nabla^{LC}_X\alpha)(Y)&=X(\alpha(Y))-\alpha(\nabla_X^{LC}Y)\\
&=X(\alpha(Y))-\alpha(\nabla^{LC}_YX)-\alpha([X,Y])\\
&=X(\alpha(Y))-Y(\alpha(X))-\alpha([X,Y])+(\nabla^{LC}_Y\alpha)(X)
\end{aligned}
\]
where we used the Leibniz rule as well as the torsion-freeness of the Levi-Civita connection. Now, for $\alpha=df$ we have $X(\alpha(Y))-Y(\alpha(X))=[X,Y](f)$ which completes the proof.
\end{proof}

\section{Proof of Theorem \ref{glowne}.}
Let us calculate $\int_{M}|\nabla ^2 \sqrt{u}|^2 \;\dVol_g$ first. We use the Bochner formula \eqref{Bochner}. We emphasize here that it is crucial to exchange the Hessian by objects which undergo the integration by parts in a convenient way. Laplacian $\Delta_g$ is one of such objects, see for instance \cite{Stavrov}. The Bochner formula applied to $\sqrt{u}$ and \eqref{sqrt} yield
\begin{eqnarray*}
&&\int_{M}|\nabla ^2 \sqrt{u}|^2 \;\dVol_g\\
&=&\int_{M}\frac{1}{2} \Delta_g |\nabla \sqrt{u}|^2 \dVol_g -\int_{M}g\left(\nabla \sqrt{u}, \nabla \Delta_g \sqrt{u}\right) \dVol_g-\int_{M} Ric(\nabla \sqrt{u}, \nabla \sqrt{u}) \dVol_g\\
&=&-\frac{1}{4}\int_{M}\frac{1}{u} g\left(\nabla u, \nabla \Delta_g u\right) \dVol_g+\frac{1}{8}\int_{M}\frac{|\nabla u|^2}{u^2}\Delta_g u \;\dVol_g\\
&+&\frac{1}{4}\int_{M} g\left(\frac{\nabla u}{\sqrt{u}},\nabla \left(\frac{|\nabla u|^2}{u^{3/2}}\right)\right) \dVol_g-\frac{1}{4}\int_{M} \frac{Ric(\nabla u,\nabla u)}{u} \dVol_g.
\end{eqnarray*}
Hence, using Lemma \ref{auxi}, we obtain
\begin{eqnarray}\label{pierwsze}
&&\int_{M}|\nabla ^2 \sqrt{u}|^2 \dVol_g\nonumber \\
&=&-\frac{1}{4}\int_{M}\frac{1}{u} g\left(\nabla u, \nabla \Delta_g u\right)\dVol_g+\frac{1}{8}\int_{M}\frac{|\nabla u|^2}{u^2}\Delta_g u \dVol_g\nonumber \\
&+&\frac{1}{2}\int_{M} \frac{\nabla^2 u\left(\nabla u,\nabla u\right)}{u^2}\dVol_g-\frac{3}{8}\int_{M}\frac{|\nabla u|^4}{u^3}\dVol_g-\frac{1}{4}\int_{M} \frac{Ric(\nabla u,\nabla u)}{u} \dVol_g.\nonumber \\
\end{eqnarray}
Moreover, we notice that integration by parts, Lemma \ref{auxi} and \eqref{log} give
\begin{equation}\label{drugie}
\int_{M}\frac{|\nabla u|^2}{u^2}\Delta_g u \;\dVol_g=-2\int_{M}\frac{\nabla^2 u\left(\nabla u,\nabla u\right)}{u^2}\;\dVol_g
+2\int_{M}\frac{|\nabla u|^4}{u^3}\dVol_g.
\end{equation}
Plugging \eqref{drugie} into \eqref{pierwsze}, we obtain
\begin{eqnarray}\label{trzecie}
&&\int_{M}|\nabla ^2 \sqrt{u}|^2\dVol_g\\
&=&-\frac{1}{4}\int_{M}\frac{1}{u} g\left(\nabla u, \nabla \Delta_g u\right)\dVol_g+\frac{1}{4}\int_{M} \frac{\nabla^2 u\left(\nabla u,\nabla u\right)}{u^2}\dVol_g\nonumber \\
&-&\frac{1}{8}\int_{M}\frac{|\nabla u|^4}{u^3}\dVol_g-\frac{1}{4}\int_{M} \frac{Ric(\nabla u,\nabla u)}{u}\dVol_g.\nonumber
\end{eqnarray}
On the other hand, a similar approach, exploiting \eqref{log} and the Bochner formula applied to $\log{u}$, the application of Lemma \ref{auxi} and integration by parts \eqref{byparts}, gives
\begin{eqnarray*}
&&\int_{M}u|\nabla ^2 \log{u}|^2 \dVol_g=\\
&&\frac{1}{2}\int_{M}u\Delta_g|\nabla \log u|^2\dVol_g-\int_M ug\left(\nabla \log u,\nabla \Delta_g\log u\right)\dVol_g-\int_{M} u Ric(\nabla \log u,\nabla \log u) \dVol_g\\
&=&-\frac{1}{2}\int_{M} g\left(\nabla u,\nabla |\nabla \log u|^2\right)\dVol_g-\int_{M}\frac{1}{u} g\left(\nabla u, \nabla \Delta_g u\right)\dVol_g \\
&+&\int_{M}\frac{|\nabla u|^2}{u^2}\Delta_g u \dVol_g
+\int_{M} g\left(\nabla u,\nabla \left(\frac{|\nabla u|^2}{u^2}\right)\right)\dVol_g
-\int_{M} \frac{Ric(\nabla u,\nabla u)}{u}\dVol_g \\
&=&\int_{M} \frac{\nabla^2 u\left(\nabla u,\nabla u\right)}{u^2}\dVol_g-\int_{M}\frac{|\nabla u|^4}{u^3}\dVol_g \\
&-&\int_{M}\frac{1}{u} g\left(\nabla u, \nabla \Delta_g u\right)\dVol_g \\
&+&\int_{M}\frac{|\nabla u|^2}{u^2}\Delta_g u \;\dVol_g -\int_{M} \frac{Ric(\nabla u,\nabla u)}{u} \dVol_g.
\end{eqnarray*}
Applying \eqref{drugie} in the above, we obtain
\begin{eqnarray}\label{czwarte}
&&\int_{M}u|\nabla ^2 \log{u}|^2 \dVol_g\\
&=&-\int_{M}\frac{1}{u} g\left(\nabla u, \nabla \Delta_g u\right)\dVol_g-\int_{M} \frac{\nabla^2 u\left(\nabla u,\nabla u\right)}{u^2}\dVol_g\nonumber \\
&+&\int_{M}\frac{|\nabla u|^4}{u^3}\dVol_g-\int_{M} \frac{Ric(\nabla u,\nabla u)}{u} \dVol_g.\nonumber
\end{eqnarray}
Comparing \eqref{trzecie} i \eqref{czwarte}, we arrive at
\begin{eqnarray}\label{piate}
&&\int_{M}|\nabla ^2 \sqrt{u}|^2 \;\dVol_g\\
&=&\frac{1}{4}\int_{M}u|\nabla ^2 \log{u}|^2 \dVol_g+\frac{1}{2}\int_M\frac{\nabla^2 u\left(\nabla u,\nabla u\right)}{u^2}\dVol_g\nonumber\\
&-&\frac{3}{8}\int_{M}\frac{|\nabla u|^4}{u^3}\dVol_g.\nonumber
\end{eqnarray}
Next, we estimate $\int_{M}\frac{|\nabla u|^4}{u^3}\dVol_g$ using again Lemma \eqref{auxi} and integration by parts \eqref{byparts} (the argument below is also the proof of Proposition \ref{Bernis})
\begin{eqnarray*}
&&\int_{M}\frac{|\nabla u|^4}{u^3}\dVol_g=\int_{M}g(\nabla u, \nabla \log u)|\nabla \log u|^2\dVol_g\\
&=&-\int_{M}u\Delta_g \log u|\nabla \log u|^2\dVol_g-2\int_{M}u \nabla^2\log u \left(\nabla \log u,\nabla \log u\right) \dVol_g\\
&\leq&\left(\int_{M}u\left(\Delta_g \log u\right)^2\dVol_g\right)^{\frac{1}{2}}\left(\int_{M}\frac{|\nabla u|^4}{u^3}\dVol_g\right)^{\frac{1}{2}}\\
&+&2\left(\int_{M}u|\nabla^2 \log u|^2\dVol_g\right)^{\frac{1}{2}}\left(\int_{M}\frac{|\nabla u|^4}{u^3}\dVol_g\right)^{\frac{1}{2}}.
\end{eqnarray*}
Consequently, applying \eqref{raz} to $f=\log u$, and next dividing both sides of the above inequality by $\left(\int_{M}\frac{|\nabla u|^4}{u^3}\dVol_g\right)^{1/2}$, we arrive at
\begin{equation}\label{szoste}
\int_{M}\frac{|\nabla u|^4}{u^3}\dVol_g\leq C \int_{M}u|\nabla^2 \log u|^2\dVol_g,
\end{equation}
and the proof of Proposition \ref{Bernis} is finished.

Next, we notice that by \eqref{drugie}
\[
\int_M\frac{\nabla^2 u\left(\nabla u,\nabla u\right)}{u^2}\dVol_g=\int_{M}\frac{|\nabla u|^4}{u^3}\dVol_g-\frac{1}{2}\int_M\frac{|\nabla u|^2}{u^2}\Delta_g u \;\dVol_g.
\]
Since (see \eqref{log})
\[
\int_M\frac{|\nabla u|^2}{u^2}\Delta_g u \;\dVol_g=\int_M \Delta_g \log u \frac{|\nabla u|^2}{u}\;\dVol_g+\int_{M}\frac{|\nabla u|^4}{u^3}\dVol_g,
\]
we observe, utilizing again \eqref{raz} and \eqref{szoste}, that
\begin{eqnarray}\label{ostatnie}
&&\int_M\frac{\nabla^2 u\left(\nabla u,\nabla u\right)}{u^2}\dVol_g=\frac{1}{2}\int_{M}\frac{|\nabla u|^4}{u^3}\dVol_g-\frac{1}{2}\int_M \Delta_g \log u \frac{|\nabla u|^2}{u}\;\dVol_g\nonumber\\
&\leq& \frac{1}{2}\int_{M}\frac{|\nabla u|^4}{u^3}\dVol_g+\frac{1}{2}\left(\int_{M}u\left(\Delta_g \log u\right)^2\dVol_g\right)^{\frac{1}{2}}\left(\int_{M}\frac{|\nabla u|^4}{u^3}\dVol_g\right)^{\frac{1}{2}}\nonumber\\
&\leq& C\int_{M}u|\nabla^2 \log u|^2\dVol_g.\nonumber\\
\end{eqnarray}
Plugging \eqref{szoste} and \eqref{ostatnie} in \eqref{piate}, we obtain
\[
\int_{M}|\nabla ^2 \sqrt{u}|^2 \;\dVol_g\leq \left(\frac{1}{4}+C\right)\int_{M}u|\nabla ^2 \log{u}|^2 \dVol_g.
\]

\end{document}